\newtheorem{theorem}{Theorem}
\newtheorem{corollary}[theorem]{Corollary}
\newtheorem{lemma}[theorem]{Lemma}
\theoremstyle{remark}
\begin{document}

\title{There are $k$-uniform cubefree binary morphisms for all $k \geq 0$}

\author{James Currie and Narad Rampersad}

\address{Department of Mathematics and Statistics \\
University of Winnipeg \\
515 Portage Avenue \\
Winnipeg, Manitoba R3B 2E9 (Canada)}

\email{\{j.currie,n.rampersad\}@uwinnipeg.ca}

\thanks{The first author is supported by an NSERC Discovery Grant.}
\thanks{The second author is supported by an NSERC Postdoctoral
Fellowship.}

\subjclass[2000]{68R15}

\date{\today}

\begin{abstract}
A word is cubefree if it contains no non-empty subword of the form $xxx$.
A morphism $h : \Sigma^* \to \Sigma^*$ is $k$-uniform if $h(a)$ has
length $k$ for all $a \in \Sigma$.  A morphism is cubefree if it maps cubefree
words to cubefree words.  We show that for all $k \geq 0$ there exists
a $k$-uniform cubefree binary morphism.
\end{abstract}

\maketitle

\section{Introduction}

A \emph{square} is a non-empty word of the form $xx$, and a \emph{cube} is a
non-empty word of the form $xxx$.  An overlap is a word of the form
$axaxa$, where $a$ is a letter and $x$ is a word (possibly empty).
A word is \emph{squarefree} (resp. \emph{cubefree}, \emph{overlap-free})
if none of its factors are squares (resp. cubes, overlaps).
The construction of infinite squarefree, cubefree, and overlap-free words
is typically done by iterating a suitable morphism.  Uniform morphisms
have particularly nice properties.  In this note we show that for all
$k \geq 0$ there exists a $k$-uniform cubefree binary morphism.

Let $\Sigma^*$ denote the set of all finite words over the alphabet $\Sigma$.
A morphism $h : \Sigma^* \to \Sigma^*$ is $k$-{\it uniform} if $h(a)$ has
length $k$ for all $a \in \Sigma$; it is {\it uniform} if it is $k$-uniform
for some $k$.  A morphism $h : \Sigma^* \to \Sigma^*$ is \emph{squarefree}
(resp. \emph{cubefree}, \emph{overlap-free}) if $h(w)$ is \emph{squarefree}
(resp. \emph{cubefree}, \emph{overlap-free}) whenever $w \in \Sigma^*$ is
\emph{squarefree} (resp. \emph{cubefree}, \emph{overlap-free}).
Squarefree, cubefree, and overlap-free morphisms have been studied extensively
\cite{BEM79,BS93,Bra83,Cro82,Kar83,Ker84,RS99,RW02,RW04,RW07}.

We denote the \emph{Thue--Morse morphism} by $\theta$:
\begin{eqnarray*}
\theta(0)&=&01\\
\theta(1)&=&10.
\end{eqnarray*}
The \emph{Thue--Morse word} is the infinite fixed point of $\theta$:
\[
{\bf t} = \theta^\omega(0) = 0110100110010110 \cdots
\]
It is well-known that the Thue--Morse word is overlap-free \cite{Thu12}.
Moreover, the Thue--Morse morphism is both overlap-free and cubefree
(see \cite{Bra83,Shu00} for even stronger results).  Berstel and S\'e\'ebold
\cite{BS93} gave a remarkable characterization of
overlap-free binary morphisms: namely, that a binary morphism $h$ is
overlap-free if and only if $h(01101001)$ is overlap-free.  Furthermore,
they showed that if $h$ is an overlap-free binary morphism then $h$ is a power 
of $\theta$ (or its complement).  Thus any overlap-free binary morphism
is $k$-uniform where $k$ is a power of $2$.  It is natural to inquire if
cubefree binary morphisms exhibit similar behaviour.  In this case the answer
is no, as we are able to construct uniform binary morphisms of every
length.

For further background material concerning combinatorics on words we
refer the reader to \cite{AS03}.

\section{Main result}

The main result of this note is that for all $k \geq 0$ there exists a
$k$-uniform cubefree binary morphism.  We begin with some preliminary
lemmas.

\begin{lemma}
\label{lem1}
Let $k \geq 4$ be an integer.  Then the Thue--Morse word ${\bf t}$ contains
two distinct words of length $k$ of the form $0y0$ and two distinct words
of length $k$ of the form $0z1$.
\end{lemma}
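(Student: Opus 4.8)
The plan is to prove the slightly stronger statement $P(k)$: for each $k \ge 4$ the word ${\bf t}$ contains at least two distinct factors of the form $0y0$ and at least two distinct factors of the form $0z1$, and to do so by induction on $k$ built around the Thue--Morse morphism $\theta$. The engine of the induction is the observation that, since ${\bf t} = \theta({\bf t})$, the image under $\theta$ of any factor of ${\bf t}$ is again a factor of ${\bf t}$, and that $\theta$ is injective. Moreover $\theta$ controls endpoints: for any word $u = a_1 \cdots a_m$, the word $\theta(u)$ begins with $a_1$ and ends with the complement of $a_m$ (because $\theta(0) = 01$ and $\theta(1) = 10$). Finally, since every prefix of a factor of ${\bf t}$ is itself a factor, the word obtained from $\theta(u)$ by deleting its last letter is also a factor; this truncated word begins with $a_1$ and ends with $a_m$, so it has length $2m - 1$ and the same first and last letters as $u$.

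With these tools the induction step splits by parity. For even $k = 2m$ with $m \ge 4$, I would take two distinct factors of length $m$ beginning with $0$ and ending with $1$ (these exist by $P(m)$) and apply $\theta$: the images are distinct factors of length $2m$ beginning with $0$ and ending with $0$, giving two words of the form $0y0$. Applying $\theta$ instead to two distinct length-$m$ factors of the form $0y0$ produces two distinct length-$2m$ factors of the form $0z1$. For odd $k = 2m - 1$ with $m \ge 4$, I would run the same argument but replace ``apply $\theta$'' by ``apply $\theta$ and delete the last letter'': starting from two length-$m$ factors of the form $0y0$ (resp. $0z1$) yields two length-$(2m-1)$ factors of the form $0y0$ (resp. $0z1$), by the endpoint computation above. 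In every case injectivity of $\theta$ makes the resulting words distinct, since two $\theta$-images that agree after deletion of a final letter must already agree on the $\theta$-image of a shorter prefix and on the final truncated letter, forcing the preimages to coincide.

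The only values not reached by these reductions are $k = 4, 5, 6$ (even $k$ reduces to $m = k/2 \ge 4$ precisely when $k \ge 8$, and odd $k$ to $m = (k+1)/2 \ge 4$ precisely when $k \ge 7$), so these three form the base cases, and I would verify them by directly listing the relevant factors in a sufficiently long prefix of ${\bf t}$; for instance $0110, 0100, 0010$ and $0011, 0101$ already settle $k = 4$. I expect the main obstacle to be the odd-length case: one must ensure that truncating the $\theta$-images still yields words with the prescribed last letter and that distinctness survives the truncation, which is exactly where the endpoint bookkeeping for $\theta$ together with its injectivity is used. It is worth noting that the hypothesis $k \ge 4$ is sharp for this approach, since ${\bf t}$ contains only the single length-$3$ factor $010$ of the form $0y0$.
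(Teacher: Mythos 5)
Your proposal is correct and follows essentially the same route as the paper: induction on $k$ with base cases $k=4,5,6$, using $\theta$ to double lengths (swapping the $0y0$ and $0z1$ families in the even case) and truncating the last letter of the $\theta$-image in the odd case, with injectivity of $\theta$ guaranteeing distinctness. The endpoint bookkeeping and the reduction thresholds ($k\ge 8$ even, $k\ge 7$ odd) match the paper exactly.
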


\begin{proof}
For $k=4,5,6$ the following table gives the required pairs of subwords.

\begin{table}[htbp]
\begin{center}
\begin{tabular}{|l|l|l|}
\hline
$k=4$ & $(0010, 0100)$ & $(0101, 0011)$ \\
$k=5$ & $(00110, 01100)$ & $(01101, 01001)$ \\
$k=6$ & $(001100, 011010)$ & $(001011, 010011)$ \\
\hline
\end{tabular}
\end{center}
\end{table}

Suppose then that $k > 6$.  If $k$ is even, let $k=2r$; otherwise, let
$k=2r-1$.  Suppose inductively that ${\bf t}$ contains two distinct words
$0y0$ and $0y'0$ of length $r$ and two distinct words $0z1$ and $0z'1$of
length $r$.

If $k$ is even then the words $01\theta(y)01$, $01\theta(y')01$,
and $01\theta(z)10$, $01\theta(z')10$ are the desired words of length $k$.
If $k$ is odd then the words $01\theta(y)0$, $01\theta(y')0$,
and $01\theta(z)1$, $01\theta(z')1$ are the desired words of length $k$.
\end{proof}

The proof of the following lemma essentially follows that of
\cite[Lemma~4]{AC04}.

\begin{lemma}
\label{lem2}
Let $k \geq 7$ be an integer. Then ${\bf t}$ contains two distinct subwords
of length $k$ of the form $01x01$ and two distinct subwords of length $k$
of the form $01x10$.
\end{lemma}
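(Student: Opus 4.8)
The plan is to mirror the inductive construction used for Lemma~\ref{lem1}, but now tracking subwords of the more specific forms $01x01$ and $01x10$. The base of the induction should again be a small finite check: for each of the initial values of $k$ (here I would expect $k=7,8$ to serve as bases, since the even/odd recursion below will reduce lengths by roughly half and the Thue--Morse word must be long enough for the patterns $01\ldots01$ and $01\ldots10$ to actually occur), I would exhibit by hand or by a short table two distinct subwords of each form. This is routine: one simply scans a sufficiently long prefix of ${\bf t}$.

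\medskip

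For the inductive step I would exploit the fact that ${\bf t}$ is a fixed point of $\theta$ and that $\theta$ sends $0\mapsto 01$ and $1\mapsto 10$, so that applying $\theta$ to a short subword automatically produces the $01$ and $10$ blocks we need at the ends. Concretely, suppose ${\bf t}$ already contains two distinct subwords $0y0$ and $0y'0$ and two distinct subwords $0z1$ and $0z'1$ of the appropriate smaller length, as furnished by Lemma~\ref{lem1}. I would apply $\theta$ to these and observe that $\theta(0y0)=01\,\theta(y)\,01$ and $\theta(0z1)=01\,\theta(z)\,10$, which are exactly of the forms $01x01$ and $01x10$. Because $\theta$ is injective on words, distinctness of $y,y'$ (resp. $z,z'$) is preserved, yielding the two distinct subwords of each form. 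Splitting into the even and odd cases for $k$ (padding by a single appropriately chosen letter in the odd case, just as in the proof of Lemma~\ref{lem1}) lets me hit every length $k\geq 7$.

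\medskip

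The main obstacle I anticipate is the parity bookkeeping: I must verify that the single extra letter appended in the odd case can always be chosen so that the resulting word both has the exact prescribed ending ($\ldots 01$ or $\ldots 10$) and genuinely occurs in ${\bf t}$. Since the image words $01\theta(y)01$ etc.\ already end in the correct two letters, the delicate point is checking that truncating or extending by one letter to adjust parity does not destroy the required prefix $01$ or the occurrence in ${\bf t}$; this is precisely the kind of local check that the remark about following \cite[Lemma~4]{AC04} is meant to handle, so I would lean on that precedent rather than reprove the occurrence facts from scratch. Once the parity cases are dispatched, distinctness and the prescribed endpoints follow immediately from injectivity of $\theta$ and the structure of its images, completing the induction.
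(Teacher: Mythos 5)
Your even case is exactly the paper's argument and is fine: apply $\theta$ to the two distinct words $0v0$ (resp.\ $0v1$) of length $k/2$ from Lemma~\ref{lem1} and get $01\theta(v)01$ (resp.\ $01\theta(v)10$). The gap is in the odd case. The one-letter padding trick from Lemma~\ref{lem1} works there because the forms $0y0$ and $0z1$ constrain only a \emph{single} letter at each end, so $01\theta(y)0$ (drop the final letter of $\theta(0y0)$) is still of the form $0\cdots 0$. Here the forms $01x01$ and $01x10$ constrain the last \emph{two} letters, and no single-letter adjustment of a word ending in $01$ can yield a word ending in $01$: appending a letter gives a suffix $1a \in \{10,11\}$, and deleting the last letter gives a suffix $b0$ with $b$ the last letter of $\theta(x)$, which is never $01$. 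So the inductive step as you describe it cannot produce odd-length words of the required form, and this is not a "local check" that can be delegated to the cited precedent --- it is the entire difficulty of the lemma.

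What the paper (following \cite[Lemma~4]{AC04}) actually does for odd $k$ is structurally different: for $k \geq 23$ it writes $k$ as $8r-9$, $8r-7$, $8r-5$, or $8r-3$ with $r \geq 4$, applies $\theta^3$ (not $\theta$) to the length-$r$ words $0v0$ and $0v1$ from Lemma~\ref{lem1}, and then extracts from $\theta^3(0v0) = 01101001\,\theta^3(v)\,01101001$ and $\theta^3(0v1) = 01101001\,\theta^3(v)\,10010110$ internal occurrences of $01$ at several different offsets, which is what lets it realize all four odd residues modulo $8$; distinctness follows since $v \neq v'$. The remaining odd values $7 \leq k \leq 21$ are then handled by an explicit table, not just $k = 7$. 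Your proposal is missing this idea entirely, so as written the odd half of the lemma is unproved.
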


\begin{proof}
We only give the details for $01x01$, the proof for $01x10$ being analogous.
If $k$ is even, let $k = 2r$. We have $r = k/2 \geq 4$, so that ${\bf t}$
contains distinct words $u = 0v0$ and $u' = 0v'0$ of length $r$ by
Lemma~\ref{lem1}.  The words $\theta(u) = 01\theta(v)01$ and
$\theta(u') = 01\theta(v')01$ are therefore words of the required form of
length $k$.

If $k$ is odd and $k \geq 23$, we can write $k$ as $8r - 9$, $8r - 7$,
$8r - 5$ or $8r - 3$ for some $r \geq 4$. Let $u = 0v0$ and $u' = 0v'0$
be distinct words of length $r$ in ${\bf t}$. The word
$$
\theta^3(u) = 011\underline{01}001 \theta^3(v) \underline{01}1010\underline{01}
$$
contains words $01x01$ of lengths $8r - 9$ (including the first and second
underlined $01$'s) and $8r - 3$ (including the first and third underlined
$01$'s.)  Similarly, the word
$$
\theta^3(u') = 01101001 \theta^3(v') 01101001
$$
contains words $01x'01$ of lengths $8r - 9$ and $8r - 3$.  Moreover,
since $v \neq v'$, these words are distinct from the corresponding
subwords of $\theta^3(u)$.

Let $z = 0v1$ and $z' = 0v'1$ be distinct words of length $r$ in ${\bf t}$.
The word
$$
\theta^3(z) = 011\underline{01}001 \theta^3(v) 10\underline{01}\underline{01}10
$$
contains words $01x01$ of lengths $8r - 7$ (including the first and second
underlined $01$'s) and $8r - 5$ (including the first and third underlined
$01$'s.)  Similarly, the word
$$
\theta^3(z') = 01101001 \theta^3(v') 10010110
$$
contains words $01x'01$ of lengths $8r - 7$ and $8r - 5$.  Moreover,
since $v \neq v'$, these words are distinct from the corresponding
subwords of $\theta^3(z)$.

For $k$ odd, $7 \leq k \leq 21$, the following table gives the required pairs
of subwords.

\begin{table}[htbp]
\begin{center}
\begin{tabular}{|l|ll|}
\hline
$k=7$ & $0100101$ & $0101101$ \\
$k=9$ & $010011001$ & $011001101$ \\
$k=11$ & $01001100101$ & $01100101101$ \\
$k=13$ & $0100101101001$ & $0110100101101$ \\
$k=15$ & $011001011001101$ & $010011001011001$ \\
$k=17$ & $01001011001101001$ & $01101001011001101$ \\
$k=19$ & $0100101100110100101$ & $0101101001100101101$ \\
$k=21$ & $011010011001011001101$ & $011001101001100101101$ \\
\hline
\end{tabular}
\end{center}
\end{table}
\end{proof}

\begin{lemma}
\label{lem3}
Let $k \geq 9$ be an integer. Then there exist two distinct cubefree words
of length $k$ of the form $00x11$.
\end{lemma}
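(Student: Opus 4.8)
The plan is to build the required words by padding the words supplied by Lemma~\ref{lem2}. Since $k \geq 9$ gives $k - 2 \geq 7$, Lemma~\ref{lem2} furnishes two \emph{distinct} subwords $u = 01x01$ and $u' = 01x'01$ of ${\bf t}$ of length $k-2$. I would then set $w = 0u1 = 001x011$ and $w' = 0u'1 = 001x'011$. Each has length $k$ and the form $00\,(1x0)\,11$, and $w \neq w'$ because $u \neq u'$. It therefore remains only to prove that $0u1$ is cubefree whenever $u$ is a subword of ${\bf t}$ that begins and ends with $01$.

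For the cubefreeness I would isolate the following claim: if $u$ is overlap-free, begins with $01$, and ends with $01$, then $0u1$ is cubefree. This applies in our situation because every factor of ${\bf t}$ is overlap-free. Since $u$ is itself cubefree, any cube $ppp$ occurring in $0u1$ must meet the first position or the last position, so it is a prefix of $0u$, a suffix of $u1$, or all of $0u1$. These possibilities are interchanged by the reverse--complement map $v \mapsto \overline{v^R}$, which preserves overlap-freeness and sends the relevant class to itself (if $u = 01x01$ then $\overline{u^R} = 01\overline{x^R}01$ again begins and ends with $01$); so it suffices to rule out a prefix cube together with the degenerate case $0u1 = ppp$.

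To rule out a prefix cube $ppp$ of $0u$ of period $t = |p|$, note that $0u$ begins with $00$ (as $u$ begins with $01$), so $p$ begins with $0$, and with $00$ once $t \geq 2$; but the third letter of $0u$ is $1$, which already forces $t \neq 1$ and $t \neq 2$. For $t \geq 3$ the prefix $ppp$ makes the first $3t$ letters of $0u$ periodic of period $t$, whence the length-$(3t-1)$ prefix of $u$, obtained by deleting the leading $0$, still has period $t$. Its length-$(2t+1)$ prefix is then a word of length $2t+1$ with period $t$, that is, an overlap, and it lies inside $u$ --- contradicting that $u$ is overlap-free. The degenerate case $0u1 = ppp$ is handled in the same spirit: the interior $u$ is a factor of $p^\omega$ of length $3t - 2 \geq 2t+1$, so its length-$(2t+1)$ prefix is again an overlap inside $u$.

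The main obstacle is precisely this boundary analysis: prepending a $0$ and appending a $1$ can a priori create a cube straddling the seam, and the crux is the observation that any such cube forces a short periodic factor --- an overlap --- back inside $u$, where the overlap-freeness of Thue--Morse factors forbids it. Once the claim is established the lemma is immediate, and I expect the write-up to consist of the construction $w = 0u1$, the reverse--complement reduction, and the period-$t$ overlap extraction sketched above.
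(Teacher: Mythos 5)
Your proof is correct, but it takes a genuinely different route from the paper's. The paper splits on the parity of $k$: for even $k=2r-2$ it takes the words $01x10$ of length $r$ from Lemma~\ref{lem2}, applies $\theta$, strips one letter from each end and complements; for odd $k=2r+1$ it prepends $11$ before trimming and needs a separate ad hoc argument that the two extra letters cannot create a cube; and it still requires an explicit table for $9\le k\le 14$ because the reduction only works for $k\ge 15$. You instead take the words $01x01$ of length $k-2$ from Lemma~\ref{lem2} and pad with a single $0$ in front and a single $1$ behind, which works uniformly for all $k\ge 9$ with no parity split and no table. The key step in both proofs is the same in spirit --- a cube created at the boundary forces a short periodic factor, i.e.\ an overlap, back inside a factor of the overlap-free word ${\bf t}$ --- but your version isolates this as a clean general claim (an overlap-free word beginning and ending in $01$ stays cubefree after padding to $0u1$), and the details check out: the letters $001$ at the start kill periods $1$ and $2$, a prefix cube of period $t\ge 3$ yields a period-$t$ word of length $3t-1\ge 2t+1$ inside $u$, the reverse--complement symmetry disposes of the suffix case, and $k\ge 9$ guarantees $t\ge 3$ in the degenerate case $0u1=ppp$. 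Your argument is shorter, avoids all case enumeration, and uses only the $01x01$ half of Lemma~\ref{lem2}; the paper's version has the mild advantage of producing words that are (complements of) factors of ${\bf t}$ in the even case, but nothing downstream depends on that.
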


\begin{proof}
For $9 \leq k \leq 14$, the following table gives the required pairs
of subwords.

\begin{table}[htbp]
\begin{center}
\begin{tabular}{|l|ll|}
\hline
$k=9$ & $001001011$ & $001010011$ \\
$k=10$ & $0010011011$ & $0010110011$ \\
$k=11$ & $00100110011$ & $00101001011$ \\
$k=12$ & $001001010011$ & $001001011011$ \\
$k=13$ & $0010010110011$ & $0010011001011$ \\
$k=14$ & $00100101001011$ & $00100101101011$ \\
\hline
\end{tabular}
\end{center}
\end{table}

Suppose $k \geq 15$.  If $k$ is even, let $k=2r-2$; otherwise, let $k=2r+1$.
Note that $r \geq 7$, so by Lemma~\ref{lem2}, there are distinct subwords
$01x10$ and $01x'10$ of ${\bf t}$ of length $r$.

If $k$ is even, then the complements of the words
$0^{-1}\theta(01x10)1^{-1} = 110\theta(x)100$ and
$0^{-1}\theta(01x'10)1^{-1} = 110\theta(x')100$ are cubefree words of the
desired form of length $k$.

If $k$ is odd, then let $u = 11\theta(01x10)1^{-1} = 110110\theta(x)100$
and $u' = 11\theta(01x'10)1^{-1} = 110110\theta(x')100$.  We claim that
$u$ and $u'$ are cubefree.  Suppose to the contrary that $u$ contains a cube.
Since $0110\theta(x)100$ is overlap-free, any such cube would have to start
with either the first or second 1, but in either case, by inspection
the period of the cube is at least 3, which forces an overlap in
$0110\theta(x)100$, a contradiction.  Similarly, $u'$ is cubefree.
Taking the complements of $u$ and $u'$ gives cubefree words of the desired
form of length $k$.
\end{proof}

\begin{theorem}
\label{main_thm}
Let $k \geq 5$ be an integer. Let $w_0$, $w_1\in 00\{0,1\}^k11$ be distinct
cube-free words. The morphism $\phi:\{0,1\}^*\rightarrow\{0,1\}^*$ given by
$$
\phi(i)=\theta(w_i)(010)^{-1}
$$
is cube-free.
\end{theorem}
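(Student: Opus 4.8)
The plan is to argue by contradiction: assume $w$ is cubefree but that $\phi(w)=\phi(i_1)\phi(i_2)\cdots\phi(i_n)$ contains a cube $uuu$, and deduce that $w$ itself must contain a cube. Throughout I would write $L=2k+5=|\phi(i)|$ and exploit the explicit shape of the blocks. Writing $w_i=00m_i11$, the hypothesis that $w_i$ is cubefree forces $m_i$ to begin with $1$ and end with $0$ (otherwise $w_i$ contains $000$ or $111$), so
$$
\phi(i)=0101\,\theta(m_i)\,1,\qquad \theta(w_i)=\phi(i)\,010 ,
$$
and hence every block begins with $010110$ and ends with $011$. The first fact I would record is that a single block cannot contain a cube: $\phi(i)$ is a prefix, hence a factor, of $\theta(w_i)$, and $\theta(w_i)$ is cubefree because $\theta$ is a cubefree morphism and $w_i$ is cubefree. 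Consequently any cube in $\phi(w)$ must cross at least one block boundary.

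The key step is a synchronization statement: the block boundaries of $\phi(w)$ are recoverable from the word itself, i.e.\ an occurrence of a full block inside $\phi(w)$ must be aligned with the factorization into blocks. Deep inside a block the word agrees with a Thue--Morse image $\theta(m_i)$, whose factors are just $\theta$-images of cubefree words; the only places where the structure can be misread are the short windows straddling a junction, where the trimmed suffix $\cdots011$ of one block abuts the prefix $0101\cdots$ of the next, producing the local pattern $\cdots011\,0101\cdots$. I would verify, by examining these bounded junction windows together with the forced boundary letters of the $m_i$, that the block prefix $010110$ can only begin at a genuine boundary, which yields the alignment.

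Granting alignment, I would dispose of cubes of large period first. If $|u|\ge L$ then each of the three copies of $u$ contains a complete block, and the $|u|$-periodicity of $uuu$ together with the synchronization forces $L\mid |u|$, say $|u|=mL$, and forces the corresponding blocks to coincide. Since $w_0\ne w_1$ gives $\phi(0)\ne\phi(1)$, a uniform morphism with distinct letter images is injective, so equal blocks have equal preimages and $w$ contains the cube $(i_j\cdots i_{j+m-1})^3$, contradicting the cubefreeness of $w$.

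The main obstacle is the remaining range $1\le |u|<L$, where the cube is too short to contain a whole block and instead lives in a bounded neighbourhood of the junctions it meets. Here I cannot invoke alignment directly; instead I would analyse the junction region $\cdots\theta(m_{i_j})\,1\,0101\,\theta(m_{i_{j+1}})\cdots$ by hand. Because $\theta(m_{i_j})$ ends in $01$ and $\theta(m_{i_{j+1}})$ begins with $10$, the window around a junction is pinned to a fixed pattern independent of $k$, and a period $p<L$ inside such a window would propagate into the overlap-free, cubefree Thue--Morse interior on one side. I would show this forces either an overlap in some $\theta(m_i)$ or a short cube inside a single $\theta(w_i)$, both impossible. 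I expect this short-period junction analysis to be the delicate part of the argument, and it is presumably where the hypothesis $k\ge 5$ enters, guaranteeing that the blocks are long enough for the Thue--Morse interior to dominate the bounded boundary effects.
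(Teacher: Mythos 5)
Your overall architecture (argue by contradiction, exploit the block structure $\phi(i)=0101\,\theta(m_i)\,1$, synchronize block boundaries, and pull the cube back to $v$) is the same as the paper's, but two load-bearing steps fail as stated. First, your synchronization marker is wrong: $010110=\theta(001)$, so the block prefix $010110$ occurs \emph{inside} the interior $\theta(m_i)$ whenever $m_i$ contains the factor $001$, which nothing prevents (e.g.\ for $w_i=001001011$ one has $m_i=10010$, which contains $001$). So the claim that $010110$ can only begin at a genuine boundary is false, and the alignment statement on which both of your cases rest is unsupported. The paper's marker is $10101$: any $\theta$-preimage of $10101$ must contain $000$ or $111$, so $10101$ cannot occur in $\theta(w_i)$, yet it does occur at every junction $\cdots 011\cdot 010110\cdots$; hence occurrences of $10101$ in $\phi(v)$ sit exactly at block boundaries and their positions differ by multiples of $|\phi(0)|$. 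Second, your short-period case is not actually carried out, and its sketch rests on the ``overlap-free\ldots\ Thue--Morse interior''. But $\theta(m_i)$ need not be overlap-free: the theorem assumes only that $w_i$ is cubefree, and a cubefree $m_i$ can contain an overlap (e.g.\ $0110110$ is cubefree), which $\theta$ then propagates. So your intended contradiction ``forces an overlap in some $\theta(m_i)$\,, impossible'' does not arrive at an impossibility. The paper has no separate short-period case at all: any cube $xxx$ crossing a junction must contain the $10101$ at that junction (otherwise it lies inside $\phi(i)010=\theta(w_i)$, which is cubefree); since $10101$ always occurs in the context $1101011$, a cube properly containing it has period $p\ge 5$; and then the $p$-periodicity of $xxx$ yields two occurrences of $10101$ at distance exactly $p$, forcing $|\phi(0)|$ to divide $p$. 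This disposes of every period $1\le p<L$ in one stroke.

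A smaller but genuine gap remains in your large-period endgame. Even granting $L\mid |u|$ and alignment, when the cube is not block-aligned the full blocks it contains number only $3m-1$, so ``equal blocks have equal preimages'' yields a word of length $3m-1$ in $v$ with period $m$ --- a $(3-\frac1m)$-power, not a cube. The paper closes this by writing $x=a\phi(u)b$ with $|ab|=|\phi(0)|$, so that $ba=\phi(i_0)$ for some letter $i_0$, and then using $\phi(0)\ne\phi(1)$ to conclude that at most one of $\phi(0),\phi(1)$ has $b$ as a prefix or at most one has $a$ as a suffix; this pins down the letter split across the boundary and produces the honest cube $u0u0u0$ (or its analogue) in $v$. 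You would need this, or an equivalent device, to finish even the case you consider easy.
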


\begin{proof}
The existence of $w_0$ and $w_1$ is guaranteed by Lemma~\ref{lem3}.
Suppose that $v\in\{0,1\}^*$ is cube-free, but $\phi(v)$
contains a cube $xxx$. Let $p=|x|$. For $i=0,1$, 
since neither $000$ nor $111$ is a factor of $w_i$,  
word $\phi(i)$ cannot have $10101$ as a factor; 
for the same reason, word $\phi(i)$ has prefix $01011$ and suffix $11$. 
Thus $10101$ occurs as a factor in $\phi(v)$ exactly at the boundaries
between images of letters of $v$. It follows that the indices of any
occurences of $10101$ in $\phi(v)$ differ by multiples of $|\phi(0)|$. 
Again, since $10101$ always occurs in $\phi(v)$ in the context $1101011$,
no proper extension of $10101$ in $v$ has period 1, 2, 3 or 4.

Since $w_i$ and $\theta$ are cube-free, for $i\in \{0,1\}$, the word
$\phi(i)010=\theta(w_i)$ is cube-free.
It follows that $xxx$ spans the border between $\phi(i)$ and $\phi(j)$
for some $i$, $j\in\{0,1\}$ and in fact $xxx$ contains factor 10101.
Since $10101$ is cube-free, $xxx$ is a proper extension of $10101$,
and thus has period at least 5. Note that any factor $u$ of
$xxx$ with $|u|\le p$ occurs twice in $xxx$ with indices differing by $p$.
In particular, since $|10101|=5\le p$, two occurrences of $10101$ in $xxx$
have indices differing by $p$. We conclude that $p$ is a multiple of
$|\phi(0)|$. 

Write $x=a\phi(u)b$ where $u\in\{0,1\}^*$, $|ab|=|\phi(0)|$.
We have $xxx=a\phi(u)ba\phi(u)ba\phi(u)b$, and $ba=\phi(i_0)$
for some $i_0\in\{0,1\}$. However, since $w_1\ne w_2$, we also have
$\phi(0)\ne \phi(1)$ so that either
\begin{itemize}
\item{at most one of $\phi(0)$, $\phi(1)$ has $b$ as a prefix OR}
\item{at most one of $\phi(0)$, $\phi(1)$ has $a$ as a suffix.}
\end{itemize}
Suppose that at most one of $\phi(0)$, $\phi(1)$ has $b$ as a prefix.
(The other case is similar.) Without loss of generality, say that
$\phi(0)$ has $b$ as a prefix. It follows that $\phi(v)$ contains
$\phi(u0u0u0)$,  and $v$ contains $u0u0u0$ as a factor. This is a
contradiction.
\end{proof}

\begin{corollary}
For every integer $k \geq 0$, there exists a $k$-uniform cubefree binary
morphism.
\end{corollary}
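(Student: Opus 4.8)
The plan is to prove the corollary by strong induction on $k$, combining three ingredients: Theorem~\ref{main_thm} for the large odd lengths, composition with the Thue--Morse morphism $\theta$ to pass from a length to its double, and a short list of explicitly constructed morphisms for the small odd lengths. Throughout I use two elementary facts, both immediate from the definitions: a composition of cubefree morphisms is cubefree, and a composition of an $a$-uniform morphism with a $b$-uniform morphism is $ab$-uniform.

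For the base of the induction, $k=0$ is handled by the morphism sending each letter to the empty word, which is vacuously cubefree; $k=1$ by the identity morphism; and $k=2$ by $\theta$, which is cubefree as noted in the introduction. For the inductive step when $k$ is even, write $k=2m$ with $m\ge 2$; the induction hypothesis supplies an $m$-uniform cubefree morphism $\psi$, and then $\theta\circ\psi$ is $2m=k$-uniform and cubefree. This settles every even length in terms of a strictly smaller one.

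The large odd lengths come directly from Theorem~\ref{main_thm}. Given odd $k\ge 15$, set $k'=(k-5)/2\ge 5$ and apply the theorem with parameter $k'$; the two distinct cubefree words $w_0,w_1\in 00\{0,1\}^{k'}11$ exist by Lemma~\ref{lem3}, whose hypothesis amounts to $k'+4\ge 9$, that is, $k'\ge 5$. Since $|w_i|=k'+4=(k+3)/2$, each image $\phi(i)=\theta(w_i)(010)^{-1}$ has length $2\cdot(k+3)/2-3=k$, so $\phi$ is a $k$-uniform cubefree binary morphism.

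The cases that remain are the six small odd lengths $k\in\{3,5,7,9,11,13\}$, and I expect these to be the main obstacle, since none of the structural reductions above applies. For each such $k$ I would exhibit an explicit pair of images of length $k$ and verify that the resulting morphism is cubefree. This verification reduces to a finite computation: one may invoke a test for cubefreeness of uniform binary morphisms from the cited literature, so that it suffices to check the images of all cubefree words up to a bounded length, or one may argue directly, in the spirit of the proof of Theorem~\ref{main_thm}, that any cube in an image would force a cube or an overlap inside a Thue--Morse word. Assembling the base cases, the even case, the large odd case, and these small odd cases completes the induction, and the corollary follows.
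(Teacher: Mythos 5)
Your proposal follows essentially the same route as the paper: Theorem~\ref{main_thm} handles odd $k\ge 15$ (your length count $2(k'+4)-3=k$ with $k'=(k-5)/2\ge 5$, and the check that Lemma~\ref{lem3} applies since $k'+4\ge 9$, are exactly right), even lengths are reduced to odd ones by composing with $\theta$ (the paper does this in one step by writing $k=2^a(2r+1)$ and using $\theta^a\circ\phi$, you do it one doubling at a time by induction; the two are equivalent), and the remaining small odd lengths are to be settled by explicit examples. The one step you have not actually carried out is that last one: you must exhibit concrete cubefree $k$-uniform binary morphisms for $k\in\{3,5,7,11,13\}$ (for $k=9$ one can take $\phi_3^2$), and nothing in the lemmas or the theorem guarantees their existence, so until those witnesses are written down and verified the small cases are genuinely open. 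The paper supplies them in a table and checks them with Ker\"anen's criterion, which states that a uniform binary morphism is cubefree provided the images of all words of length at most $4$ are cubefree; your suggestion to invoke such a finite test from the cited literature is exactly what is needed, it just has to be executed on actual morphisms.
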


\begin{proof}
If $k$ is odd and $k \geq 15$, then Theorem~\ref{main_thm} gives a cubefree
morphism of length $k$.  For $k \in \{3,5,7,11,13\}$, the morphisms given in
the table below are cubefree.

\begin{table}[htbp]
\begin{center}
\begin{tabular}{|l|ll|}
\hline
$\phi_3$ & $0 \to 001$ & $1 \to 011$ \\
$\phi_5$ & $0 \to 01001$ & $1 \to 10110$ \\
$\phi_7$ & $0 \to 0010011$ & $1 \to 0011011$ \\
$\phi_{11}$ & $0 \to 00101001011$ & $1 \to 00101001101$ \\
$\phi_{13}$ & $0 \to 0010010110011$ & $1 \to 0010011001011$ \\
\hline
\end{tabular}
\end{center}
\end{table}

The cubefreeness of these morphisms can be established by
a criterion of Ker\"anen \cite{Ker84}, which states that to confirm that a
uniform binary morphism is cubefree, it suffices to check that the
images of all words of length at most 4 are cubefree.

For $k=1$, the identity morphism is certainly cubefree, and for $k=9$,
clearly we may take $\phi_3^2$.  This establishes the result for all odd $k$.

If $k=0$, the morphism that maps every word to the empty word is trivially
cubefree.  If $k$ is positive, even and not a power of 2, then $k = 2^a(2r+1)$
for some positive $a,r$.  If $\phi$ is a $(2r+1)$-uniform cubefree morphism,
then the morphism $\theta^a \circ \phi$ is a $k$-uniform cubefree morphism.
Similarly, if $k = 2^a$, then $\theta^a$ is a $k$-uniform cubefree morphism.
This completes the proof.
\end{proof}

Brandenburg \cite{Bra83} gave an example of an $11$-uniform squarefree
ternary morphism and stated further that there are no smaller uniform
squarefree ternary morphisms (excluding $0$-uniform and $1$-uniform morphisms).
We therefore conclude by asking:
\begin{quotation}
Do there exist $k$-uniform squarefree ternary morphisms for all $k \geq 11$?
\end{quotation}

\bibliographystyle{amsalpha}

\end{document}